\documentclass[11pt]{amsart}
\usepackage{amsmath, amsthm, amssymb,amsfonts}
\usepackage{verbatim,geometry,graphicx}
\usepackage{enumerate,matlab-prettifier}
\usepackage{algpseudocode,algorithm}

\pdfpagewidth 21.0cm
\pdfpageheight 29.7cm
\geometry{tmargin=3cm,bmargin=3cm,lmargin=2.5cm,rmargin=2.5cm}
\setlength{\parindent}{0cm}

\theoremstyle{definition}

\theoremstyle{plain}
\newtheorem{theorem}{Theorem}

\newtheorem{lemma}[theorem]{Lemma}

\newtheorem{rem}[theorem]{Remark}

\newcommand{\R}{{\mathbb R}}
\newcommand{\C}{{\mathbb C}}
\newcommand{\nxn}{{n\times n}}
\newcommand{\diag}{\operatorname{diag}}

\newcommand{\bmat}[1]{\begin{bmatrix}#1\end{bmatrix}}
\newcommand{\abs}[1]{\vert #1 \vert}
\newcommand{\lam}{\lambda}

\title{Forming a symmetric, unreduced, tridiagonal matrix with a given spectrum}

\author{Luca Dieci}
\address{School of Mathematics \\ Georgia Tech \\
	Atlanta, GA 30332 U.S.A.}
\email{dieci@math.gatech.edu}
\author{Alessandro Pugliese}
\address{Dipartimento di Matematica, Univ. of Bari, I-70100, Bari,
	Italy}
\email{alessandro.pugliese@uniba.it}
\thanks{
The authors wish to thank Cinzia Elia for having provided the motivation
for this note and the School of Mathematics of Georgia Tech for hosting
the visit of Alessandro Pugliese.  The work has been partially supported by the 
GNCS-Indam group.}
\subjclass{65F18, 15A18}

\keywords{Inverse eigenvalue problem, symmetric, unreduced, tridiagonal}

\begin{document}

\maketitle

\begin{abstract}
Given a set of $n$ distinct real numbers, our goal is to form a 
symmetric, unreduced, tridiagonal, matrix with those numbers
as eigenvalues.  We give an algorithm which is a
stable implementation of a naive algorithm forming the characteristic 
polynomial and then using a technique of Schmeisser.
% in  \cite{schmeisser1993real}.
\end{abstract}

\pagestyle{myheadings}
\thispagestyle{plain}
\markboth{L. Dieci, A. Pugliese}{Unreduced tridiagonal}

\section{The problem}
We are interested in solving the following inverse problem, Problem 1.
\begin{itemize}
\item 
Problem 1: {\emph{Given $n$ real values
$\lambda_1 < \lambda_2 < \cdots < \lambda_n$, $n\gg 1$, find an unreduced,
symmetric, tridiagonal matrix $A$, with these values as eigenvalues}}.   \\
Mathematically, via the association of the characteristic polynomials of $A$
to its roots, this problem
is equivalent to one examined by Schmeisser in \cite{schmeisser1993real}.

\item 
Problem 2:  {\emph{Given a monic
polynomial $p(x)$, with distinct real roots 
$\lambda_1<\lambda_2 < \cdots < \lambda_n$, 
find an unreduced, symmetric, tridiagonal matrix $T$, with the given polynomial
as characteristic polynomial}}.
\end{itemize}
In \cite{schmeisser1993real}, the author gave a very elegant solution of the
above Problem 2, which is ultimately an algorithmic procedure for building the matrix
$T$.  So, in principle, Problem 1 could be solved in two steps by the following
algorithm.

\begin{algorithm}
	\caption{Naive algorithm to solve Problem 1}\label{simple}
	\begin{algorithmic}[1]
		\State Form the polynomial $p(x)$ needed in Problem 2, from its roots
		$\lambda_1<\lambda_2 < \cdots < \lambda_n$.
		\State Use the procedure of \cite{schmeisser1993real}, Algorithm 
		\ref{algo:schm} below, to obtain $T$.  
	\end{algorithmic}
\end{algorithm}

Of course, a  symbolic implementation of Algorithm \ref{simple}
renders $T$ exactly, but unfortunately a symbolic implementation limits us
to using small sized matrices, which is undesirable.
%(see Section \ref{algos}).  
Regretfully, when
implementing Algorithm \ref{simple} in finite precision, 
%the procedure shows severe instabilities and 
we obtain very disappointing
results insofar as the quality of the eigenvalues of the computed $T$.
This is actually not surprising, since already forming 
$p(x)$ from the $\lambda_j$'s, say using the {\tt Matlab} command {\tt poly},
is an unstable process (this fact is well known, see \cite{Beckermann}).
%see Section \ref{algos} for a summary of results validating the above claims.  
In this short note, we propose a stable algorithm to solve Problem 1,
and we prove that our algorithm is {\emph{mathematically equivalent}} to the above
two-step strategy of Algorithm \ref{simple}.

\begin{rem}
We must appreciate that the scope of the algorithm in \cite{schmeisser1993real} is to
obtain a matrix $T$ whose characteristic polynomial is the given $p(x)$.  And, in this
respect, the algorithm of \cite{schmeisser1993real} performs, in finite precision,
as well as one may wish for, that is the characteristic polynomial of the computed $T$
has coefficients that are accurate approximation to the coefficients of the 
original $p(x)$.  It is
the eigenvalues of the computed $T$ that are not accurate approximations
of the given $\lambda_i$'s.
\end{rem}

\smallskip
\noindent {\bf Notation}. 
We let $e_k$ be the $k$-th column of the identity matrix, and $e$ be the vector of
all $1$'s. 
%$n\times p$ matrix having all entries equal to $1$. 

\section{The algorithm of \cite{schmeisser1993real}, our algorithm, 
and its equivalence to Algorithm \ref{simple}}
\label{algos}

Here, we first review the technique of \cite{schmeisser1993real}, then, we
propose our new algorithm, Algorithm {\tt diag2trid} in \eqref{algo:diag2trid}.
In Figure \ref{fig:spec_accu},
we show the performance, in finite precision, of Algorithm \ref{simple} and of
Algorithm \ref{algo:diag2trid}, 
on a suite of problems with random real eigenvalues uniformly distributed
in  $[-10,10]$; data are averaged over $100$ realizations for each value of $n$.
Accuracy is measured by 
considering the worse absolute error between  each prescribed eigenvalue and the
corresponding eigenvalue of the computed tridiagonal matrix.   After $n\approx 26$,
there is not even a single digit of accuracy.
%  .... {\bf TO DO} Add this graph of results {\bf END DO} ....

\begin{figure}[h]
\centering
\includegraphics[width=0.66\textwidth]{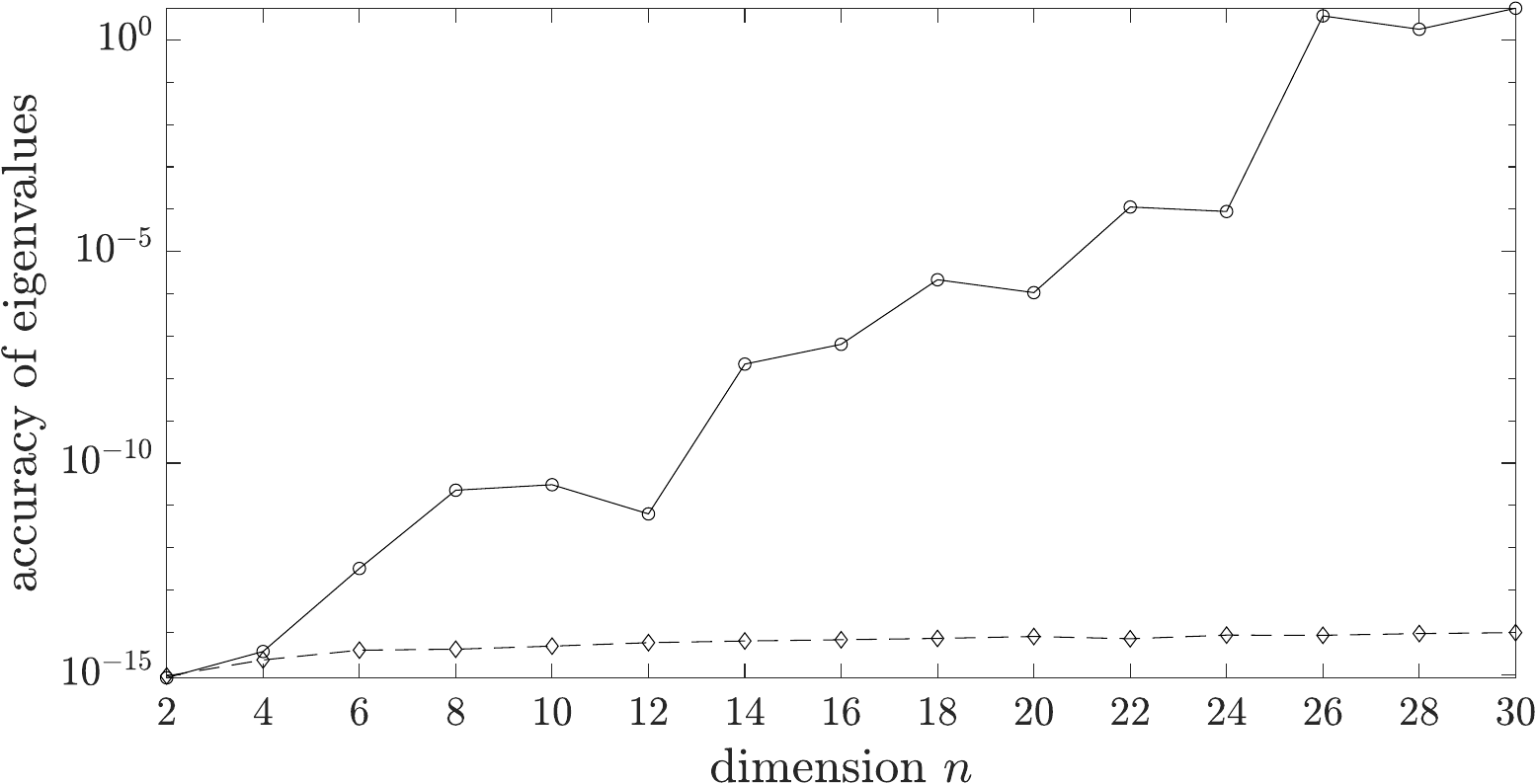}
\caption{The figure shows the accuracy of the eigenvalues of the tridiagonal matrix
computed through Algorithm \ref{simple} (solid-circles) and Algorithm \ref{algo:diag2trid} 
(dashed-diamonds) as the dimension $n$ increases.}\label{fig:spec_accu}
\end{figure}

Finally, we show our main result, 
that --in exact arithmetic-- Algorithms \ref{simple} and \ref{algo:diag2trid} are
equivalent, which allows
us to conclude that our Algorithm  \ref{algo:diag2trid}, {\tt diag2trid},
is nothing but a stable 
%(in finite precision)
implementation of the naive Algorithm \ref{simple}.

\subsection{\bf Schmeisser's algorithm.} 
Let $u(x)=x^n+a_{n-1}x^{n-1}+\ldots+a_1x+a_0$ be a real polynomial having only real
roots $\lam_1< \lam_2< \ldots < \lam_n$. Schmeisser in \cite{schmeisser1993real} 
proposed an algorithm to construct a symmetric, unreduced,
tridiagonal matrix $T\in\R^\nxn$ whose 
characteristic polynomial $p_n(x)=\det(xI-T)$ is equal to $u$. 
%Clearly, this means that 
%$T$ has eigenvalues $\lam_1,\ldots, \lam_n$.

\begin{rem}
In fairness, in \cite{schmeisser1993real}, the author did not require, or need, the
eigenvalues to be distinct, and of course in the case of equal eigenvalues 
Schmeisser obtained a reduced tridiagonal form.  In our case, we are only
interested in distinct eigenvalues.
\end{rem}

For a polynomial $p(x)$ of degree $k\ge 0$, denote by $\gamma(p)$ its 
leading coefficient, so that $p/\gamma(p)$ is monic. The basic
algorithm proposed by Schmeisser goes as follows.

\begin{algorithm}
 \caption{Schmeisser's Algorithm}\label{algo:schm}
\begin{algorithmic}[1]
\State Set $f_1(x)=u(x)$, $f_2(x)=\frac{1}{n}u'(x)$.
\For{$\nu=1,\ldots,n-1$}
\State Divide $f_\nu(x)$ by $f_{\nu+1}$ to obtain $f_\nu(x)=q_\nu(x)f_{\nu+1}(x)-r_\nu(x)$.
\State Define $f_{\nu+2}(x)=r_\nu(x)/\gamma(r_\nu)$.
\EndFor
\State Set $q_n(x)=f_n(x)$ and $f_{n+1}(x)=1$.
\end{algorithmic}
\end{algorithm}

Then, the matrix $T$ is defined as:

\begin{equation}\label{eq:T_schm}
T=\bmat{
-q_1(0) & \sqrt{\gamma_1} & & \\
\sqrt{\gamma_1} & -q_2(0) & \sqrt{\gamma_2} & \\ \\
 & \ddots & \ddots & \ddots \\ \\
& &  \sqrt{\gamma_{n-2}} & -q_{n-1}(0) & \sqrt{\gamma_{n-1}} & \\
& & & \sqrt{\gamma_{n-1}} & -q_n(0)
},
 \end{equation}
 where $\gamma_k=\gamma(r_k)$ for all $k=1,\ldots, n-1$.
 
Note that, combining steps 2-3, one can express Algorithm \ref{algo:schm} as:
 
\begin{equation}\label{eq:schm_algo_comb}
\left\{
\begin{array}{l}
 f_\nu(x)=q_\nu(x)f_{\nu+1}-\gamma_\nu f_{\nu+2}(x),\ \nu=1,2,\ldots,n-1,\\
q_n(x)=f_n(x), \\
f_{n+1}(x)=1  .
\end{array}
\right.
\end{equation}

\subsection{\bf Our Algorithm: {\tt diag2trid}.} Given $n$ real numbers
$\lam_1< \lam_2< \ldots < \lam_n$, 
Algorithm {\tt diag2trid} below provides a simple and stable way to construct a real 
symmetric, unreduced, tridiagonal matrix $T$ having eigenvalues
$\lam_1,\ldots,\lam_n$.   

\begin{algorithm}[H]
 \caption{{\tt diag2trid}  Algorithm}\label{algo:diag2trid}
\begin{algorithmic}[1]
\State Set $D=\diag(\lam_1,\ldots,\lam_n)$.
\State Let $Q$ be a Householder reflection such that $Qe_1=e/\sqrt{n}$.
\State Set $A=Q^TDQ$.
\State Perform the \emph{Householder tridiagonalization} algorithm on $A$, so that
$$H^TAH=\widehat T\ , $$
where $\widehat T$ is tridiagonal and $H$ is orthogonal.
\end{algorithmic}
\end{algorithm}

For a complete description of the \emph{Householder tridiagonalization} algorithm, we 
refer to the book \cite{GVL2013matrixcomp} (see Algorithm 8.3.1). It is important 
to point out that the transformation $H$ is such that $H(:,1)=H(1,:)^T=e_1$. This means 
that $\widehat T$ is diagonalized by $(QH)^T$, whose first row is given by 
$e^T/\sqrt{n}$. Now, invoking the Implicit $Q$ Theorem
(see \cite[Theorem 8.3.2]{GVL2013matrixcomp}), we have that a real 
symmetric tridiagonal matrix $T$ is completely characterized by its (real) eigenvalues 
and by the first row of the (orthogonal) matrix of its eigenvectors, in the following sense.

\begin{lemma}\label{lem:impQtheo} Suppose $S$ and $T$ are $\nxn$ real symmetric, 
tridiagonal and unreduced, and such that 
$U^TSU=W^TTW=\diag(\lam_1,\ldots,\lam_n)$, with $U$ and $W$ orthogonal. If 
$U(1,:)=W(1,:)$, then $S_{ii}=T_{ii}$ for $i=1,\ldots,n$ and 
$\abs{S_{i, i-1}}=\abs{T_{i, i-1}}$ for $i=2,\ldots,n$. 
\end{lemma}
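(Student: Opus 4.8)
The plan is to prove this as the uniqueness half of the Implicit $Q$ Theorem, specialized to the symmetric tridiagonal setting. Write $D=\diag(\lam_1,\ldots,\lam_n)$, so that $S=UDU^T$ and $T=WDW^T$, and set $Z=UW^T$. Then $Z$ is orthogonal and $SZ=UDW^T=ZT$, so it suffices to show that $Z$ is a sign matrix, $Z=\diag(\pm1,\ldots,\pm1)$: once this is known, $Z^T=Z$ and $S=ZTZ^T=ZTZ$, from which one reads off $S_{ii}=T_{ii}$ and $S_{i,i-1}=Z_{ii}Z_{i-1,i-1}T_{i,i-1}$, hence $\abs{S_{i,i-1}}=\abs{T_{i,i-1}}$.

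First I would use the hypothesis on the first rows. Since $U(1,:)=W(1,:)$ is the same as $U^Te_1=W^Te_1$, we get $Z^Te_1=WU^Te_1=WW^Te_1=e_1$; because $Z$ is orthogonal, having first row equal to $e_1^T$ forces its first column to equal $e_1$ as well, i.e. $Ze_1=e_1$. Next I would exploit $SZ=ZT$ column by column. Writing $z_k=Ze_k$ for the columns of $Z$ and using that $T$ is tridiagonal, one obtains the three-term recurrence
\[
T_{k+1,k}\,z_{k+1}=Sz_k-T_{kk}z_k-T_{k-1,k}z_{k-1},\qquad 1\le k\le n,
\]
with the convention that terms carrying index $0$ or $n+1$ are dropped. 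The claim $z_k=\pm e_k$ for every $k$ is then proved by induction: the base case $z_1=e_1$ is the previous step. Assuming $z_j=\epsilon_j e_j$ with $\epsilon_j\in\{\pm1\}$ for $j\le k$, I expand the right-hand side using tridiagonality of $S$ (so that $Se_k$ involves only $e_{k-1},e_k,e_{k+1}$); taking the inner product with $e_k$ and using $z_{k+1}\perp z_k=\pm e_k$ kills the coefficient of $e_k$ and yields $S_{kk}=T_{kk}$, and similarly the $e_{k-1}$ coefficient vanishes, leaving $T_{k+1,k}z_{k+1}=\epsilon_k S_{k+1,k}e_{k+1}$. Since $S$ is unreduced, $S_{k+1,k}\neq0$, so the right-hand side is nonzero; hence $T_{k+1,k}\neq0$, $z_{k+1}$ is a unit vector parallel to $e_{k+1}$, i.e. $z_{k+1}=\pm e_{k+1}$, and comparing norms gives $\abs{T_{k+1,k}}=\abs{S_{k+1,k}}$. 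At $k=n$ there is no $z_{n+1}$ term and the recurrence simply delivers $S_{nn}=T_{nn}$, completing the induction and hence the proof.

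I expect the only delicate points to be organizational rather than conceptual: keeping the boundary indices $k=1$ and $k=n$ straight in the recurrence, and invoking the unreduced hypothesis at precisely the right place — it is exactly what prevents $z_{k+1}$ from degenerating and forces the next column of $Z$ onto $\pm e_{k+1}$, which is what makes the induction close. I would also remark that distinctness of the $\lam_i$ is not actually used in this lemma; it holds automatically, since an unreduced symmetric tridiagonal matrix always has simple eigenvalues.
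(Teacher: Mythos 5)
Your proof is correct. Note, though, that the paper itself gives no proof of this lemma: it simply states it as a specialization of the Implicit $Q$ Theorem, citing \cite[Theorem 8.3.2]{GVL2013matrixcomp}. What you have written is, in effect, a self-contained proof of exactly that uniqueness statement, by the standard Lanczos-type argument: setting $Z=UW^T$, using $U(1,:)=W(1,:)$ to pin down $Ze_1=e_1$, and then propagating $z_{k+1}=\pm e_{k+1}$ through the three-term recurrence $T_{k+1,k}z_{k+1}=Sz_k-T_{kk}z_k-T_{k,k-1}z_{k-1}$, with the unreduced hypothesis on $S$ invoked precisely to keep the right-hand side nonzero so the induction closes, and the boundary cases $k=1$ and $k=n$ handled by dropping the out-of-range terms. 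All steps check out (in particular the orthogonality $z_{k+1}\perp e_j$, $j\le k$, which kills the $e_{k-1}$ and $e_k$ components, and the norm comparison giving $\abs{S_{k+1,k}}=\abs{T_{k+1,k}}$). The trade-off is the obvious one: the paper's citation buys brevity, while your argument buys self-containedness and makes two things explicit that the citation hides --- where the unreduced hypothesis is actually used, and the fact (which you correctly remark) that distinctness of the $\lam_i$ is not needed as a hypothesis, since an unreduced symmetric tridiagonal matrix has simple eigenvalues automatically.
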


\begin{rem}
We note that with Algorithm \ref{algo:diag2trid}, we obtain the same matrix $\hat T$
regardless of how we initially arranged the eigenvalues on the diagonal matrix $D$.
This is because we have chosen $Q$ with first column given by $e/\sqrt{n}$, and
$Pe=e$ for any permutation matrix $P$.  A fortiori, in light of the equivalence
(in exact arithmetic) between Algorithms \ref{simple} and \ref{algo:diag2trid},
this had to be so, since Algorithm \ref{algo:schm} renders a tridiagonal $T$
from the sole knowledge of the characteristic polynomial.
\end{rem}

\subsection{\bf Mathematical equivalence between Algorithms \ref{simple}
and \ref{algo:diag2trid}.} 
Next, we show that, in exact arithmetic,
the matrix $T$ produced by Algorithm \ref{simple} and the matrix $\widehat T$ 
produced by Algorithm \ref{algo:diag2trid} are ``essentially" the same 
(i.e., up to the signs of the off-diagonal entries). 
Because of Lemma \ref{lem:impQtheo}, it is enough to show 
that $T$ in \eqref{eq:T_schm} is diagonalized into $D$ by an orthogonal matrix 
whose first row is equal to $e^T/\sqrt{n}$.

We will use the following fact from \cite[Equation (4)]{DPTZ2022eigenvec}. 

\begin{lemma}\label{lem:DPTZeigenvec}
If $A\in\C^\nxn$ is Hermitian with eigenvalues $\lam_1,\ldots,\lam_n$ and corresponding 
eigenvectors $v_1,\ldots,v_n$, then we have
\begin{equation*}
 \abs{(v_i)_1}^2 p'_{A}(\lam_i)=p_{M_1}(\lam_i),\ i=1,\ldots,n,
\end{equation*}
where $p_A$ is the characteristic polynomial of $A$ and $M_1=A(2:n,2:n)$. 
\end{lemma}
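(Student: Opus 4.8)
The plan is to compute the $(1,1)$ entry of the resolvent $(xI-A)^{-1}$ in two different ways and equate the results. Throughout I would take $v_1,\ldots,v_n$ to be an orthonormal eigenbasis and assume the $\lam_i$ distinct — this is the case of interest here, and in fact it is what makes each quantity $\abs{(v_i)_1}^2$ well defined in the first place.

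First, for $x$ not an eigenvalue, $(xI-A)^{-1}=\operatorname{adj}(xI-A)/p_A(x)$, so $\bigl[(xI-A)^{-1}\bigr]_{11}$ equals the $(1,1)$ cofactor of $xI-A$ divided by $p_A(x)$; that cofactor is $\det(xI_{n-1}-M_1)=p_{M_1}(x)$, hence $\bigl[(xI-A)^{-1}\bigr]_{11}=p_{M_1}(x)/p_A(x)$. Second, from the spectral decomposition $A=\sum_{j}\lam_j v_j v_j^*$ one gets $(xI-A)^{-1}=\sum_j(x-\lam_j)^{-1}v_jv_j^*$, so $\bigl[(xI-A)^{-1}\bigr]_{11}=\sum_j\abs{(v_j)_1}^2/(x-\lam_j)$.

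Equating the two and multiplying through by $p_A(x)=\prod_{k=1}^n(x-\lam_k)$ produces the polynomial identity
\[
p_{M_1}(x)=\sum_{j=1}^n\abs{(v_j)_1}^2\prod_{k\ne j}(x-\lam_k),
\]
which holds for all $x$ since it holds for all but finitely many. Setting $x=\lam_i$ annihilates every summand except $j=i$ and leaves $p_{M_1}(\lam_i)=\abs{(v_i)_1}^2\prod_{k\ne i}(\lam_i-\lam_k)$; since $\lam_i$ is a simple root of $p_A$ we have $p_A'(\lam_i)=\prod_{k\ne i}(\lam_i-\lam_k)$, and this is precisely the asserted formula.

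The argument is essentially bookkeeping, so there is no serious obstacle; the only steps that want a moment's care are the passage from the rational identity to the polynomial one (equivalently, recognizing the right-hand side as a Lagrange/partial-fraction expansion of $p_{M_1}/p_A$) and the observation that distinctness of the eigenvalues is what gives the statement content. A slightly shorter variant avoids the resolvent altogether: apply the residue formula $\operatorname{Res}_{x=\lam_i}\bigl(p_{M_1}(x)/p_A(x)\bigr)=p_{M_1}(\lam_i)/p_A'(\lam_i)$ for a rational function with simple poles, and then identify this residue with $\abs{(v_i)_1}^2$ using the spectral expansion of $\bigl[(xI-A)^{-1}\bigr]_{11}$ above.
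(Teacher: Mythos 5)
Your argument is correct: computing $\bigl[(xI-A)^{-1}\bigr]_{11}$ once via the adjugate (giving $p_{M_1}(x)/p_A(x)$) and once via the spectral decomposition (giving $\sum_j \abs{(v_j)_1}^2/(x-\lam_j)$), clearing denominators, and evaluating at $x=\lam_i$ with $p_A'(\lam_i)=\prod_{k\ne i}(\lam_i-\lam_k)$ is a complete proof. Note, however, that the paper does not prove this lemma at all: it quotes it as Equation (4) of the cited survey of Denton, Parke, Tao and Zhang, so there is no internal proof to compare against; your resolvent/partial-fraction argument is essentially one of the standard proofs collected in that survey. Two small points of care, both of which you handle sensibly: the quantity $\abs{(v_i)_1}^2$ is only unambiguous once the eigenvectors are taken orthonormal, and your restriction to distinct eigenvalues is narrower than the lemma as stated (the identity survives multiple eigenvalues, where both sides vanish by interlacing), but it is exactly the setting in which the paper uses the lemma, so nothing needed for the paper's main theorem is lost.
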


\begin{theorem} Let $\lam_1<\ldots<\lam_n$ and let $T\in\R^\nxn$ the symmetric 
tridiagonal matrix produced by Algorithm \ref{algo:schm}. Let $W\in\R^\nxn$ be 
orthogonal and such that $W^TTW=\diag(\lam_1,\ldots,\lam_n)$. Then 
$W(1,:)=e/\sqrt{n}$.
\end{theorem}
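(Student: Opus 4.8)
The plan is to invoke Lemma~\ref{lem:DPTZeigenvec} with $A=T$. Let $v_1,\ldots,v_n$ denote unit eigenvectors of $T$ associated with $\lam_1,\ldots,\lam_n$, so that the $i$th entry of $W(1,:)$ equals $(v_i)_1$ up to the (undetermined) sign of $v_i$, and set $M_1=T(2:n,2:n)$. Lemma~\ref{lem:DPTZeigenvec} gives $\abs{(v_i)_1}^2\,p_T'(\lam_i)=p_{M_1}(\lam_i)$ for $i=1,\ldots,n$. Since the $\lam_i$ are distinct, each is a simple root of $p_T$, hence $p_T'(\lam_i)\ne 0$; so the whole theorem reduces to the single polynomial identity $p_{M_1}(x)=f_2(x)$. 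Indeed, granting this and using that $p_T=u$ (correctness of Schmeisser's construction, which will also re-emerge below), we get $p_T'=u'=n f_2$ and therefore $\abs{(v_i)_1}^2=p_{M_1}(\lam_i)/p_T'(\lam_i)=f_2(\lam_i)/\bigl(n f_2(\lam_i)\bigr)=1/n$; choosing the sign of each $v_i$ so that $(v_i)_1>0$ then yields $W(1,:)=e^T/\sqrt n$.

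To establish $p_{M_1}=f_2$ I would prove the more general claim that, for every $k=1,\ldots,n+1$,
\begin{equation*}
f_k(x)=\det\bigl(xI-T(k:n,k:n)\bigr),
\end{equation*}
with the convention that the empty trailing block $T(n+1:n,n+1:n)$ has determinant $1$ (consistent with $f_{n+1}=1$); the case $k=1$ recovers $p_T=u$ and the case $k=2$ is exactly $p_{M_1}=f_2$. The point is that both sides obey the same three-term recurrence. Expanding $\det\bigl(xI-T(k:n,k:n)\bigr)$ along its first row and column, and recalling from \eqref{eq:T_schm} that the $(k,k)$ entry of $T$ is $-q_k(0)$ while the square of its $(k,k+1)$ entry is $\gamma_k$, gives
\begin{equation*}
\det\bigl(xI-T(k:n,k:n)\bigr)=\bigl(x+q_k(0)\bigr)\det\bigl(xI-T(k+1:n,k+1:n)\bigr)-\gamma_k\,\det\bigl(xI-T(k+2:n,k+2:n)\bigr)
\end{equation*}
for $k=1,\ldots,n-1$. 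On the other hand \eqref{eq:schm_algo_comb} reads $f_k=q_k f_{k+1}-\gamma_k f_{k+2}$; since a standard degree count in the Euclidean-type sweep of Algorithm~\ref{algo:schm} gives $\deg f_\nu=n-\nu+1$ and each $f_\nu$ is monic, $q_k$ is monic of degree one, i.e. $q_k(x)=x+q_k(0)$, so the two recurrences coincide. The claim then follows by downward induction on $k$, the base cases $k=n+1$ (both sides equal $1$) and $k=n$ (both sides equal $x+q_n(0)=q_n=f_n$) being immediate.

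The step I expect to need the most care is the assertion that each quotient $q_k$ is monic of degree exactly one — equivalently, that no remainder $r_\nu$ in Algorithm~\ref{algo:schm} undergoes a degree drop. This is exactly where the hypothesis $\lam_1<\cdots<\lam_n$ enters (equivalently, that Algorithm~\ref{algo:schm} returns a genuinely $n\times n$ \emph{unreduced} tridiagonal matrix, so all $\gamma_k>0$): it makes $f_1,f_2,\ldots$ a bona fide Sturm sequence and forbids such degeneracies. Everything else is bookkeeping; once $p_{M_1}=f_2$ is in hand, Lemma~\ref{lem:DPTZeigenvec} closes the proof, and combining the resulting statement $W(1,:)=e^T/\sqrt n$ with Lemma~\ref{lem:impQtheo} delivers the promised equivalence (up to signs of off-diagonal entries) between Algorithms~\ref{simple} and~\ref{algo:diag2trid}.
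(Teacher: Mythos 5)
Your proposal is correct and takes essentially the same route as the paper: reduce via Lemma~\ref{lem:DPTZeigenvec} to the identity $p_{T(2:n,2:n)}(x)=\tfrac{1}{n}p'_T(x)$ and obtain it by matching Schmeisser's combined recursion \eqref{eq:schm_algo_comb} with the bottom-up three-term determinant recursion for the trailing principal submatrices of $T$. Your write-up is merely more explicit than the paper's on two points it leaves implicit, namely that each quotient $q_k$ is monic of degree one (no degree drop, $\gamma_k>0$) and that the eigenvector signs must be normalized so that $(v_i)_1>0$.
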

\begin{proof}
 Because of Lemma \ref{lem:DPTZeigenvec}, it is enough to show that
 $$
 p_{T(2:n,2:n)}(x)=\frac{1}{n}p'_T(x),\ \text{ for all } x\in\R.
 $$
 For any symmetric tridiagonal unreduced matrix
 $$T=
 \bmat{a_1 & b_2 & \\
b_2 & a_2 & b_3 \\
& \ddots & \ddots & \ddots \\
& & b_{n-1} & a_{n-1} & b_{n} \\
& & & b_{n} & a_{n} \\
 },
 $$
the following recursive formula involving the determinants of its principal submatrices is well known:
\begin{equation*}
\left\{
\begin{array}{l}
p_0(x)=1\\
p_1(x)=(x-a_1) \\
p_k(x)=(x-a_k)p_{k-1}(x)-b_k^2p_{k-2}(x),\ k=2,3,\ldots,n
\end{array},
\right.
\end{equation*}
where $p_k(x)=\det(T(1:k,1:k))$  for all $k\ge 2$.

Equivalently, one can run across the diagonal of $T$ from bottom to top, and rewrite the recursion as
\begin{equation}\label{eq:recursive_rewritten}
\left\{
\begin{array}{l}
f_{n+1}(x)=1 \\
f_n(x)=(x-a_n) \\
f_k(x)=(x-a_k)f_{k+1}(x)-b_{k+1}^2f_{k+2}(x),\ k=n-1, n-2, \ldots,1
\end{array}
\right.
\end{equation}
where $f_k(x)=\det(T(k+1:n,k+1:n))$ for all $k\le n-1$.

Notice that, fixing
\begin{equation*}
\left\{
\begin{array}{l}
q_\nu= \ x-a_\nu\\
\gamma_\nu= \ b_\nu^2
\end{array},\ \nu=1, 2, \ldots, n-1
\right.
\end{equation*}

the recurrence formula \eqref{eq:recursive_rewritten} is equivalent to \eqref{eq:schm_algo_comb}, and hence
$p_{T(2:n,2:n)}(x)=\frac{1}{n}p'_T(x)$ by construction. 
\end{proof}

\end{document}